
\documentclass[11pt]{amsart}
\usepackage{amsmath}
\usepackage{amssymb}
\usepackage{amsfonts}
\usepackage{mathrsfs}
\usepackage{graphicx}
\usepackage{epsfig}
\usepackage[T1]{fontenc}
\numberwithin{equation}{section}       

\theoremstyle{plain}
\newtheorem{theorem}{Theorem}[section]

\newtheorem{prop}{Proposition}[section]

\newtheorem{lemma}[prop]{Lemma}

\theoremstyle{definition}

\theoremstyle{remark}

\newtheoremstyle{citing}
  {3pt}
  {3pt}
  {\itshape}
  {}
  {\bfseries}
  {.}
  {.5em}
  {\thmnote{#3}}

\theoremstyle{citing}


 \textwidth=120mm

\DeclareMathAlphabet{\mathpzc}{OT1}{pzc}{m}{it} 

%
%

%
%






\newcommand{\teta}{\widetilde{\teta}}



%
%









%
%




%


\begin{document}
\title[]{On the Laurent coefficients of the Riemann map for the complement of the
Mandelbrot set}
\author{Genadi Levin}
\address{Institute of Mathematics, The Hebrew University, 
Jerusalem, 91904, Israel}
\email{levin@math.huji.ac.il}
\date{\today}
\begin{abstract}
Let $\psi(w)=w+C_0+\frac{C_1}{w}+...+\frac{C_\ell}{w^\ell}+...$ be the 
Laurent series of the Riemann uniformization map from the complement of the
unit disk onto the complement of the Mandelbrot set. 
We straighten a result of~\cite{l2} about arithmetic properties of the coefficients $C_\ell$. This confirms an empirical observation by Don Zagier, 
see~\cite{bfh}.  
\end{abstract}

\maketitle

\section{Introduction and the main result}
Let $f_c(z)=z^2-c$ and $f^n_c$ denotes its $n$-iterate, $n=1,2,...$. 
(Here and later on we keep the notations of~\cite{l2}, in particular,
we use the parameter 
$-c$ instead of standard $c$.) The set
$M=\{c: \sup_{n}|f_c^n(0)|<\infty \}$ is called the Mandelbrot set.
It is a closed bounded subset of the plane and, by the maximum principle, the complement 
${\bf C}\setminus M=\{c: f_c^n(0)\to \infty, n\to \infty\}$
is connected. Douady and Hubbard~\cite{dh} prove that $M$ is also connected. 
For the proof, they construct
a conformal isomorphism 
\begin{equation}\label{phi}
\varphi:{\bf C}\setminus M \to {\bf C}\setminus \overline{\bf D}
\end{equation}  
from the complement of $M$ onto the complement of the unit disk. 
Note that the famous MLC conjecture says that $M$ is locally connected and 
this is equivalent to say
that the inverse (Riemann) map
\begin{equation}\label{psi}
\psi=\varphi^{-1}: {\bf C}\setminus \overline{\bf D}\to {\bf C}\setminus M
\end{equation}
extends continuously onto the unit circle $\partial {\bf D}$.

Consider the Laurent series of $\varphi$ and $\psi$ at $\infty$:
\begin{equation}\label{B}
\varphi(c)=c+B_0+\frac{B_1}{c}+...+\frac{B_\ell}{c^\ell}+...,
\end{equation}
\begin{equation}\label{C}
\psi(w)=w+C_0+\frac{C_1}{w}+...+\frac{C_\ell}{w^\ell}+... .
\end{equation}
Every number $B_\ell$, $C_\ell$ is either zero or
has a finite 2-adic expansion: 
\begin{equation}\label{exp}
B_\ell=\frac{R_\ell}{2^{p_\ell}}, \ C_\ell=\frac{K_\ell}{2^{q_\ell}}, \ \ell=0,1,...,
\end{equation}
for some odd integer numbers $R_\ell$, $K_\ell$ and some integers 
$p_\ell$, $q_\ell$. 
In what follows, given an integer $m\not=0$,
we denote by $ord (m)$ the maximal power of $2$ which devides $m$, and we set  $ord (0)=+\infty$.
For a fraction $m/n$ with $m,n$ non-zero integers, $ord (m/n)=ord(m) -ord(n)$. In particular,
$ord (B_\ell)=-p_\ell$, $ord (C_\ell)=-q_\ell$ for non-zero $B_\ell$, $C_\ell$.
The following properties of the Laurent coefficients $B_\ell$ and $C_\ell$ are found in
\cite{l1},~\cite{l2}.
\begin{theorem}\label{ordB} \cite{l1},~\cite{l2}
For every $\ell\ge 0$, 
\begin{equation}\label{ordb}
p_\ell=\ell+1 + ord (\ell+1)!=ord(2\ell+2)!
\end{equation}
In particular, $B_\ell\not=0$.
\end{theorem}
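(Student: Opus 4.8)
The plan is to recall the explicit recursive description of the coefficients $B_\ell$ that comes from the functional equation satisfied by $\varphi$. Since $\varphi$ conjugates $f_c(z)=z^2-c$ on the complement of $M$ to $w\mapsto w^2$ on the complement of $\overline{\bf D}$, one has the Böttcher-type identity $\varphi(f_c(c)) = \varphi(c)^2$, i.e. $\varphi(c^2-c)=\varphi(c)^2$ (using the critical value $f_c(0)=-c$ and the chosen sign convention). Substituting the Laurent series \eqref{B} into both sides and comparing coefficients of $c^{-k}$ gives, for each $\ell$, an expression for $B_\ell$ as a polynomial with integer coefficients in $B_0,\dots,B_{\ell-1}$, together with a leading term. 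First I would make this recursion explicit enough to read off the $2$-adic valuation of the newly produced term and to check that all the other terms have strictly larger (or at least no smaller) valuation, so that $ord(B_\ell)$ is governed by a single dominant contribution.

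The heart of the matter is then a purely arithmetic induction on $\ell$. Assuming $ord(B_j) = -(j+1+ord(j+1)!)$ for all $j<\ell$, I would feed these valuations into the recursion. The binomial-type sums that appear (coming from expanding $\varphi(c)^2$ and from expanding powers of $c^2-c$) must be estimated $2$-adically; here the key elementary tool is Legendre's formula $ord(n!) = n - s_2(n)$, where $s_2(n)$ is the binary digit sum, together with Kummer's theorem that $ord\binom{m+n}{m}$ equals the number of carries when adding $m$ and $n$ in base $2$. The target identity $p_\ell = \ell+1+ord(\ell+1)! = ord(2\ell+2)!$ itself follows from Legendre, since $ord(2\ell+2)! = (2\ell+2) - s_2(2\ell+2) = (2\ell+2) - s_2(\ell+1) = (\ell+1) + \big((\ell+1)-s_2(\ell+1)\big) = \ell+1+ord(\ell+1)!$; so the real content is showing the recursion reproduces exactly this value and never less. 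In particular $p_\ell$ is finite, which immediately gives $B_\ell\neq 0$.

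The main obstacle I anticipate is the cancellation bookkeeping: the recursion produces many terms of a priori equal minimal $2$-adic valuation, and one must show they do not all conspire to cancel (which would push $ord(B_\ell)$ above the claimed value and could even allow $B_\ell=0$). This is where the precise combinatorics of carries in base $2$ enters, and where one likely needs a sharper statement than just a valuation bound — e.g. tracking the leading $2$-adic digit, or identifying one distinguished term (plausibly the one coming from the ``diagonal'' contribution $2B_{\ell-1}\cdot(\text{lead})$ or from the top power of $-c$ inside $c^2-c$) whose valuation is strictly smallest and hence cannot be cancelled. Organizing the induction so that this distinguished term is isolated cleanly, and so that the error terms provably have strictly larger valuation, is the technical crux; once that is in place the rest is a routine application of Legendre's and Kummer's formulas. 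This result is already established in \cite{l1} and \cite{l2}, so here I would mostly reproduce that argument in the form needed for the sequel on the $C_\ell$.
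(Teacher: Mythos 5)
There is a genuine problem at the very first step of your plan: the identity you propose to expand, $\varphi(f_c(c))=\varphi(c)^2$, i.e.\ $\varphi(c^2-c)=\varphi(c)^2$, is false for the parameter map $\varphi$ of~(\ref{phi}). The B\"ottcher equation $\phi_c(f_c(z))=\phi_c(z)^2$ holds in the dynamical plane for each \emph{fixed} $c$, where $\phi_c$ is the B\"ottcher coordinate of $f_c$ at infinity; the Douady--Hubbard map is $\varphi(c)=\phi_c(\text{critical value})$, and applying it to $c^2-c$ changes the parameter as well, so no such semiconjugacy survives (it would force ${\bf C}\setminus M$ to be invariant under $c\mapsto c^2-c$, which it is not). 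The correct route — the one used in \cite{j}, \cite{bfh}, \cite{l1}, \cite{l2} — works with the dynamical series $\phi_c(z)$ (or equivalently $\varphi(c)=\lim_n (f_c^n(\text{critical point or value}))^{2^{-n}}$), whose Laurent coefficients are polynomials in $c$, and only then specializes $z$ to the critical value; the recursion you need for the $B_\ell$ comes from that two-variable object, not from a functional equation for $\varphi$ alone. Beyond this, even granting a correct recursion, your text explicitly defers the actual content of the theorem — isolating a distinguished term of strictly minimal $2$-adic valuation and showing the remaining terms cannot cancel it — calling it the ``technical crux'' without carrying it out, so what you have is a plan rather than a proof. (Your Legendre-formula verification that $\ell+1+ord(\ell+1)!=ord(2\ell+2)!$ is correct, but that is the easy identity between the two expressions for $p_\ell$, not the valuation statement itself.) Note also that the present note does not reprove Theorem~\ref{ordB}: it is quoted from \cite{l1}, \cite{l2} and used as the hypothesis of Theorem~\ref{Zager}, whose proof is the actual content here; so if you want a self-contained argument you must reconstruct the argument of \cite{l2}, starting from the correct B\"ottcher-type recursion described above.
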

\begin{theorem}\label{zeroC} \cite{l1},~\cite{l2}
For $m\ge 2$ and $0\le l\le 2^{m}-3$, $C_{(2l+1)2^{m}}=0$. 
\end{theorem}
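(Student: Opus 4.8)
The plan is to reduce Theorem~\ref{zeroC} to an explicit recursion for the $C_\ell$ coming from a functional equation for $\psi$, and then to extract the vanishing by a $2$-adic induction.

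\textbf{The functional equation.} Let $\phi_c$ be the Böttcher coordinate of $f_c$ at $\infty$, normalized by $\phi_c(z)/z\to1$, so that $\phi_c(f_c(z))=\phi_c(z)^2$; by Douady--Hubbard one has $\varphi(c)=\phi_c(c)$. Putting $z=c$ in the Böttcher equation and using $f_c(c)=c^2-c$ gives $\varphi(c)^2=\phi_c(c^2-c)$. Let $A(c,\cdot)=\phi_c^{-1}$; it is a universal series $A(c,u)=u+\sum_{j\ge1}\gamma_j(c)u^{1-2j}$ with $\gamma_j\in\Z[\tfrac12][c]$, $\deg\gamma_j=j$, $\gamma_j(0)=0$, determined by $A(c,u^2)=A(c,u)^2-c$. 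Substituting $c=\psi(w)$ (so $\varphi(c)=w$) converts $\varphi(c)^2=\phi_c(c^2-c)$ into
\[
\psi(w)^2-\psi(w)=A\bigl(\psi(w),w^2\bigr)=w^2+\sum_{j\ge1}\gamma_j\bigl(\psi(w)\bigr)\,w^{2-4j},
\]
an identity of formal Laurent series in $1/w$ in which each power of $1/w$ receives contributions from only finitely many $j$.

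\textbf{Reduction to the even coefficients.} Replace $w$ by $-w$ and subtract: the $w^2$ term drops out, and since $\gamma_j(X)-\gamma_j(Y)$ is divisible by $X-Y$ one may divide through by $\psi(w)-\psi(-w)$, obtaining an identity among the even parts alone,
\[
\psi(w)+\psi(-w)-1=\sum_{j\ge1}\frac{\gamma_j(\psi(w))-\gamma_j(\psi(-w))}{\psi(w)-\psi(-w)}\,w^{2-4j},
\]
while adding gives a companion identity for $\psi(w)\psi(-w)$. Read off coefficient by coefficient, these two give a recursion that determines the even‑indexed coefficients $C_0,C_2,C_4,\dots$ inductively, with the odd‑indexed coefficients eliminated entirely. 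The low orders are then immediate: the constant term forces $C_0=\tfrac12$, the $w^{-2}$ term gives $C_2=\tfrac14$, and since the $j=1$ term above equals $\tfrac12 w^{-2}$ while the $j\ge2$ terms are $O(w^{-6})$, \emph{there is no $w^{-4}$ term on the right}, so $C_4=0$ already.

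\textbf{The induction.} Prove Theorem~\ref{zeroC} by strong induction on $N$, running over $N=(2l+1)2^m$ with $m\ge2$ and, for each $m$, over $l=0,1,\dots,2^m-3$. The base cases are short computations. In the inductive step one feeds the known lower‑index values into the recursion above for $C_N$: the ``diagonal'' contribution cancels because $C_0=\tfrac12$, leaving a finite sum of products $C_iC_j$ and of coefficients of the polynomials $\gamma_j(\psi(w))$. Using the exact $2$‑adic valuations $ord(B_\ell)$ from Theorem~\ref{ordB} — hence, through the previous two steps, enough control of $ord(C_\ell)$ for $\ell<N$ — together with the elementary $2$‑adic structure of the $\gamma_j$, one checks that this sum cancels identically, i.e.\ $C_N=0$.

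\textbf{The main difficulty.} Everything rests on the $2$‑adic bookkeeping of the last step: one must show that the surviving terms cancel \emph{exactly}, yielding $C_N=0$ rather than merely a large $2$‑adic valuation, and that this happens precisely for $l\le 2^m-3$ and in general breaks down once $l$ exceeds that bound. This requires not only the valuations $ord(C_\ell)$ but information about their leading $2$‑adic digits — exactly the sharpening of Theorem~\ref{ordB} carried out in this paper — and the combinatorial task of singling out $l=2^m-3$ as the precise threshold is the delicate point.
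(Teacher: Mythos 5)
Your setup is sound as far as it goes: the functional equation $\varphi(c)^2=\phi_c(c^2-c)$ with the inverse B\"ottcher series $A(c,u)=u+\sum_{j\ge1}\gamma_j(c)u^{1-2j}$ is essentially the Jungreis/Bielefeld--Fisher--v.~Haeseler scheme for computing the $C_\ell$, and your low-order checks ($C_0=\tfrac12$, $C_2=\tfrac14$, $C_4=0$ in this paper's normalization) are correct. Be aware, however, that the present note does not prove Theorem~\ref{zeroC} at all: it only proves Theorem~\ref{Zager} (hence Theorem~\ref{ordC}) and refers to \cite{l2} (and, for a different argument, \cite{bfh}) for proofs of Theorem~\ref{zeroC}. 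So there is no in-paper proof to match; your text has to stand on its own, and as written it is a plan rather than a proof.

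The genuine gap is that the entire content of the theorem sits in the sentence ``one checks that this sum cancels identically, i.e.\ $C_N=0$,'' which you then concede in your final paragraph you cannot carry out. Two concrete points. First, $2$-adic bookkeeping of the kind you propose cannot, by itself, produce an exact equality $C_N=0$: valuations (and even leading $2$-adic digits) of the $B_\ell$ and of lower $C_k$ only bound $ord(C_N)$ from below -- indeed Theorem~\ref{Zager} shows that the purely formal consequence of the valuations of the $B_\ell$ for even $\ell$ is merely $q_\ell<p_\ell$, never vanishing. A workable mechanism must bring in something non-formal; the classical route is to prove $ord(C_N)\ge 0$ for the indices $N=(2l+1)2^m$, $0\le l\le 2^m-3$, so that $C_N$ is an integer (only powers of $2$ occur in the denominators), and then to use univalence of $\psi$ on $\{|w|>1\}$: the area theorem gives $|C_N|\le N^{-1/2}<1$, forcing $C_N=0$. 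Your plan contains no analytic ingredient of this sort, and without it the claimed ``exact cancellation'' has no source. Second, your appeal to ``exactly the sharpening of Theorem~\ref{ordB} carried out in this paper'' is unfounded: this paper determines only the valuations $q_\ell\le p_\ell$ with equality precisely for $\ell=0$ and $\ell$ odd, says nothing about leading $2$-adic digits, and explicitly defers the proof of Theorem~\ref{zeroC} to \cite{l1},~\cite{l2},~\cite{bfh}. A smaller issue: the assertion that the even/odd splitting ``eliminates the odd-indexed coefficients entirely'' needs justification, since $\psi(w)\psi(-w)$ does involve the odd-indexed $C_k$; it is only correct if you take the two symmetric functions $\psi(w)+\psi(-w)$ and $\psi(w)\psi(-w)$ as the recursion's unknowns and verify that the resulting scheme is well-founded, which you do not do.
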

\begin{theorem}\label{ordC} \cite{l2}
For every odd $\ell$, 
\begin{equation}\label{ordc}
q_\ell=p_\ell=\ell+1 + ord (\ell+1)!
\end{equation}
In particular, $C_\ell\not=0$, if $\ell$ is odd.
\end{theorem}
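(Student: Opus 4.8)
The plan is to reduce the statement to the arithmetic of the $B_\ell$ already recorded in Theorem~\ref{ordB}, by inverting the power series $\varphi$ explicitly. Since $p_\ell$ is known, it suffices to show $ord(C_\ell)=ord(B_\ell)$ for odd $\ell$: then $q_\ell=-ord(C_\ell)=-ord(B_\ell)=p_\ell=\ell+1+ord(\ell+1)!$, and $C_\ell\neq0$ because $ord(C_\ell)$ is finite. Put $v(\ell):=\ell+1+ord(\ell+1)!$, so that Theorem~\ref{ordB} reads $ord(B_\ell)=-v(\ell)$; by Legendre's formula $ord(m!)=m-s_2(m)$, where $s_2(m)$ is the number of $1$'s in the binary expansion of $m$, one has $v(\ell)=2(\ell+1)-s_2(\ell+1)$.

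First I would make the inversion explicit. With $g(c):=\varphi(c)-c=\sum_{\ell\ge0}B_\ell c^{-\ell}$, so that $\varphi(c)=c+g(c)$ and $\psi=\varphi^{-1}$, the Lagrange--B\"urmann formula (applied to the Laurent series at $\infty$, e.g.\ after the substitution $c\mapsto1/c$, $w\mapsto1/w$) gives $\psi(w)=w+\sum_{k\ge1}\frac{(-1)^k}{k!}\,\frac{d^{k-1}}{dw^{k-1}}\bigl(g(w)^k\bigr)$. Reading off the coefficient of $w^{-\ell}$ then yields, for $\ell\ge1$,
\[
C_\ell=-B_\ell-\sum_{k=2}^{\ell}\frac1\ell\binom{\ell}{k}\sum_{\substack{i_1+\cdots+i_k=\ell-k+1\\ i_1,\dots,i_k\ge0}}B_{i_1}\cdots B_{i_k}=:-B_\ell+D_\ell
\]
(readily verified in low degree: $C_1=-B_1$, $C_2=-B_2-B_0B_1$, and so on). It then remains to prove that $ord(D_\ell)>-v(\ell)$ when $\ell$ is odd.

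For the $2$-adic estimate — the only place oddness of $\ell$ is used — fix $k$ and a tuple $(i_1,\dots,i_k)$ with $\sum_j i_j=\ell-k+1$, and set $n_j:=i_j+1\ge1$, so that $\sum_j n_j=\ell+1$. Combining Theorem~\ref{ordB} with Legendre's formula gives
\[
ord\bigl(B_{i_1}\cdots B_{i_k}\bigr)=-v(\ell)+c,\qquad c:=\Bigl(\sum_j s_2(n_j)\Bigr)-s_2(\ell+1)\ \ge\ 0,
\]
and, by Kummer's theorem, $c$ equals the number of carries in the base-$2$ addition $n_1+\cdots+n_k$. Since $\ell$ is odd, $ord(\ell)=0$, hence $ord\bigl(\frac1\ell\binom\ell k\bigr)=ord\binom\ell k\ge0$; therefore the monomial indexed by $(i_1,\dots,i_k)$ has order $\ge-v(\ell)+c$, and it can attain the critical value $-v(\ell)$ only when $c=0$. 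So it is enough to treat the carry-free monomials.

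When $c=0$ the binary expansions of $n_1,\dots,n_k$ have pairwise disjoint supports, so — each $n_j$ being $\ge1$ — the $n_j$ are pairwise distinct. Consequently the $k!$ orderings of a given multiset $\{n_1,\dots,n_k\}$ all occur in $D_\ell$, each carrying the same coefficient $\frac1\ell\binom\ell k$ and the same monomial $\prod_j B_{n_j-1}$; summing them collapses to the single term $\frac{k!}{\ell}\binom\ell k\prod_j B_{n_j-1}=\frac{(\ell-1)!}{(\ell-k)!}\prod_j B_{n_j-1}$, whose order is $ord\bigl(\frac{(\ell-1)!}{(\ell-k)!}\bigr)-v(\ell)$, and $ord\bigl(\frac{(\ell-1)!}{(\ell-k)!}\bigr)=ord(k!)+ord\binom\ell k\ge ord(k!)\ge1$ because $k\ge2$. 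Hence every contribution to $D_\ell$ has order $>-v(\ell)=ord(B_\ell)$, giving $ord(C_\ell)=ord(B_\ell)$, as needed. I expect the crux to be exactly this last bookkeeping: the carry-free monomials individually sit at the critical order $-v(\ell)$, so a term-by-term estimate fails, and one must see that the $S_k$-symmetric summation genuinely produces the factor $k!$ — this symmetrization is presumably the point in the argument of~\cite{l2} that requires straightening. (Note, too, that oddness enters solely through $ord(\ell)=0$; for even $\ell$ the coefficients $\frac1\ell\binom\ell k$ may fail to be $2$-adic integers, in accordance with the vanishing recorded in Theorem~\ref{zeroC}.)
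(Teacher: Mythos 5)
Your proof is correct, but it takes a genuinely different route from the paper's. The paper deduces Theorem~\ref{ordC} from the stronger formal statement Theorem~\ref{Zager}: it derives a recursion for the $C_\ell$ by comparing free terms in $\varphi^{\ell+1}+C_0\varphi^{\ell}+\dots+C_\ell+O(1/c)=c\varphi^{\ell}$ (Lemma~\ref{Rec}), proves the multinomial parity criterion (Lemma~\ref{Beq}, which is your Legendre/Kummer step in equivalent form), shows $ord(M_k)\ge ord(B_{k-1})$ with equality iff $k$ is odd by summing the relevant multinomial coefficients to $k^k$ (Lemma~\ref{key}), and then runs an induction on $\ell$ that tracks both parities at once ($ord(C_\ell)=ord(B_\ell)$ for odd $\ell$, $ord(C_\ell)>ord(B_\ell)$ for even $\ell$), using $\sum_i\binom{n}{2i}=2^{n-1}$ to control the parity of the number of critical terms in the even case. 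You instead invert $\varphi$ in closed form via Lagrange--B\"urmann, so there is no recursion and no induction: the only monomials at the critical order are the carry-free ones, and you dispose of them by noting that carry-freeness forces the $n_j$ to be pairwise distinct, so symmetrizing the ordered inner sum produces the coefficient $\frac{k!}{\ell}\binom{\ell}{k}=\frac{(\ell-1)!}{(\ell-k)!}$, whose $2$-order is at least $ord(k!)\ge 1$ for $k\ge 2$ (using $ord(\ell)=0$). Your route buys a direct, self-contained proof of exactly the odd-$\ell$ statement and makes transparent where oddness enters (only through $ord(1/\ell)=0$); the paper's route buys the stronger conclusion of Theorem~\ref{Zager}, namely the strict inequality $q_\ell<p_\ell$ for all even $\ell>0$, which your argument does not reach since $\frac{1}{\ell}\binom{\ell}{k}$ need not be a $2$-adic integer for even $\ell$. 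If you write your version up, two points should be made explicit: a justification that the Lagrange--B\"urmann expansion is valid for Laurent series at infinity (or replace it by the residue identity $C_\ell=-\frac{1}{\ell}[c^{-1}]\varphi(c)^{\ell}$, which yields the same explicit formula), and the remark that grouping the ordered tuples into multisets with a factor $k!$ is legitimate precisely because carry-free tuples have distinct entries.
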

Theorems~\ref{ordB}-\ref{zeroC} are announced in~\cite{l1}
along with schemes of their proofs.
Detailed proofs of Theorem~\ref{zeroC} (in a more general setting 
of the Multibrot set) 
and Theorem~\ref{ordB} appear in~\cite{l2}. 
Theorem~\ref{ordC} is only stated in~\cite{l2}
with the note that its proof is analogous to the one of Theorem~\ref{ordB}. 
Recently, the author learned from~\cite{bfh} that
Theorem~\ref{zeroC} and Theorem~\ref{ordC} are parts of empirical 
observations (based on computer
calculations) by Don Zagier: see the conjectures (i)-(iii) on
p.32-33 of~\cite{bfh}. Since the proof of Theorem~\ref{ordC}
was not written in~\cite{l2} we reproduce it in the present note
as a consequence of Theorem~\ref{ordB}. In the
course of the proof we confirm also all parts of 
Zagier's observation (ii), see Theorem~\ref{Zager} below.

A particular case of Theorem~\ref{zeroC} (for $l=0$) 
firts appeared in~\cite{j} 
(at the time of writing~\cite{l1}-\cite{l2} the author had no access 
to~\cite{j} though).
For a proof of Theorem~\ref{zeroC} which is different 
from~\cite{l1},~\cite{l2}, see~\cite{bfh}.

Let us state the main result. It is about
(formal) inverse of any 
(formal) Laurent series with rational coefficients such that 
their denominators $2^{p_\ell}$ satisfy~(\ref{ordb}):
\begin{theorem}\label{Zager} 
Suppose that~(\ref{B}) is a formal Laurent series
and~(\ref{C}) is its formal inverse.
Assume that the coefficients in~(\ref{B}) are of the form
$B_\ell=\frac{R_\ell}{2^{p_\ell}}$, where
$R_\ell$ are odd integer numbers and the numbers $p_\ell$ are given
by the formula~(\ref{ordb}), $\ell=0,1,....$ 
Then, for any $\ell$, the coefficient $C_\ell$ in~(\ref{C}) is either zero
or of the form 
$C_\ell=\frac{K_\ell}{2^{q_\ell}}$, where $K_\ell$ is an odd integer and,
for the integer $q_\ell$,
\begin{equation}\label{zager}
q_\ell
\le p_\ell=\ell+1 + ord (\ell+1)!
\end{equation}
The equality in~(\ref{zager}) holds if and only if either $\ell=0$ or 
$\ell$ is odd.
\end{theorem}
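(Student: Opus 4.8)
The plan is to work purely formally with the composition identity $\psi(\varphi(c))=c$, or equivalently $\varphi(\psi(w))=w$, and to track the $2$-adic valuations of the coefficients $C_\ell$ by induction on $\ell$. First I would set up the recursion: substituting the series~(\ref{C}) into~(\ref{B}) and collecting the coefficient of $w^{-\ell}$ yields, for each $\ell\ge 0$, an identity expressing $C_\ell$ (appearing linearly, with coefficient $1$, coming from the leading term $w$ of $\varphi$) in terms of $B_0,B_1,\dots,B_\ell$ and the earlier coefficients $C_0,\dots,C_{\ell-1}$, via integer-coefficient polynomial expressions in the $C_j$'s and the $B_j$'s. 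Concretely $C_\ell = -B_\ell - (\text{polynomial in } B_1,\dots,B_{\ell-1} \text{ and } C_0,\dots,C_{\ell-1} \text{ with integer coefficients})$, where every monomial that occurs is a product of $B$'s whose indices, together with the total "$C$-weight", add up to $\ell$ in the appropriate graded sense. The key point is that the leading term contributes $-B_\ell$, which by Theorem~\ref{ordB} has $\operatorname{ord}(-B_\ell)=-p_\ell=-(\ell+1+\operatorname{ord}(\ell+1)!)$ exactly.

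Next I would prove, by a simultaneous induction, the valuation bound $\operatorname{ord}(C_\ell)\ge -p_\ell$ for all $\ell$. The inductive step requires an estimate of the form: for any monomial $B_{i_1}\cdots B_{i_r}\,C_{j_1}\cdots C_{j_s}$ occurring in the recursion for $C_\ell$ (with the grading constraint relating $\sum i_a$, $\sum j_b$, $r$, $s$ to $\ell$), one has $\sum_a p_{i_a}+\sum_b p_{j_b}\le p_\ell$, using the inductive hypothesis $\operatorname{ord}(C_{j_b})\ge -p_{j_b}$. This is a statement about the superadditivity/subadditivity of the function $\ell\mapsto p_\ell=\ell+1+\operatorname{ord}(\ell+1)!$, and it is exactly the same combinatorial lemma that drives the proof of Theorem~\ref{ordB} in~\cite{l2}; I would either cite it from there or reprove the needed inequality $p_{a}+p_{b}\le p_{a+b+1}$ (and its multi-term generalization) using Legendre's formula $\operatorname{ord}(n!)=n-s_2(n)$, where $s_2$ is the binary digit sum, which makes the inequality transparent since $s_2$ is subadditive. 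This gives $q_\ell\le p_\ell$, i.e.\ the inequality in~(\ref{zager}).

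Then I would analyze the equality case. The bound $\operatorname{ord}(C_\ell)=-p_\ell$ holds iff the term $-B_\ell$ is not cancelled modulo $2^{-p_\ell+1}$ by the remaining sum; by the strict inequality $p_{i_1}+\cdots<p_\ell$ for monomials of length $\ge 2$ one sees that the only monomials that can contribute at the top valuation $-p_\ell$ are the length-one ones, namely $-B_\ell$ itself and possibly a term $-(\text{integer})\cdot C_{\ell-1}\cdot(\text{something of valuation }0)$ arising from the $B_1/c$ part — so the equality analysis reduces to a finite, explicit check of which indices $\ell$ admit a monomial other than $-B_\ell$ with $p$-sum equal to $p_\ell$. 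For $\ell=0$ there is nothing but $C_0=-B_0$, forcing equality. For odd $\ell$ one invokes (or re-derives inside this argument, exactly as in the cited proof of Theorem~\ref{ordC}) that the competing terms all have strictly smaller valuation, so equality persists; this is where one uses that $p_\ell$ jumps by exactly $1$ when passing from even to the next odd index, a parity feature of $s_2$. For even $\ell\ge 2$ I would show the opposite: there is a matching pair of contributions at valuation $-p_\ell$ (typically $-B_\ell$ against a product $B_{\ell/2 \text{-type}}$ terms, or the already-established vanishing $C_{(2l+1)2^m}=0$ of Theorem~\ref{zeroC} handling the worst cases), whose leading $2$-adic parts coincide and cancel, forcing $\operatorname{ord}(C_\ell)>-p_\ell$, i.e.\ strict inequality in~(\ref{zager}); where $C_\ell=0$ this is subsumed.

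The main obstacle I anticipate is the equality-case bookkeeping for even $\ell$: one must identify precisely which monomials in the inversion recursion realize the extremal valuation $-p_\ell$ and verify that their top $2$-adic coefficients sum to something divisible by $2$. Handling this uniformly — rather than case-by-case in $\ell$ — is delicate, and I expect it to require re-packaging the recursion (for instance via the Lagrange inversion formula, writing $C_\ell$ as a residue $\frac{1}{2\pi i}\oint \psi(w)\,w^{\ell-1}\,\varphi'(\cdots)\,dw$ turned into a sum over the $B$'s) so that the extremal terms become visible as a single combinatorial sum whose parity can be computed in closed form, parallel to the treatment in~\cite{l2}. Once that identification is in hand, the parity computation itself should again come down to Kummer's theorem on carries in binary addition, i.e.\ to $s_2$.
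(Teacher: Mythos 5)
Your overall strategy---derive a recursion for $C_\ell$ from the composition identity and run an induction on $2$-adic valuations---is the same as the paper's, and the inequality $q_\ell\le p_\ell$ in~(\ref{zager}) would indeed follow from the subadditivity you describe (in the paper this is Lemma~\ref{Beq}: $ord(B_{i_1}\cdots B_{i_j})\ge ord(B_{I+j-1})$ with $I=\sum_t i_t$, proved exactly via the integrality of a multinomial coefficient). The gap is in your equality analysis, which rests on a false premise: you claim that monomials of length $\ge 2$ have strictly larger valuation, so that only the single term $-B_\ell$ can sit at the extremal level $-p_\ell$. That is not true. Equality $\sum_t p_{i_t}=p_{I+j-1}$ holds exactly when $\binom{I+j}{i_1+1,\dots,i_j+1}$ is odd, which happens for very many multi-indices; for instance $ord(B_0B_1)=ord(B_2)$ because $\binom{3}{1,2}=3$ is odd. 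So the extremal level is populated by a large collection of monomials, and whether $ord(C_\ell)=-p_\ell$ or $ord(C_\ell)>-p_\ell$ is decided by the \emph{parity of the number} of extremal monomials, not by the presence of a single uncancelled $-B_\ell$. This is precisely the bookkeeping you flag as the anticipated obstacle and leave unresolved, hoping that a Lagrange-inversion repackaging plus Kummer's theorem will settle it; as written, the argument does not close.

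The paper closes it with two closed-form counts. Writing the recursion as $C_\ell=-\sum_{k=0}^{\ell-1}C_kM_{\ell-k}-(M_{\ell+1}-P_\ell)$, the identity $\sum_{j=1}^m\binom{m}{m-j}\sum_{r_1+\cdots+r_j=n,\,r_t\ge 1}\binom{n}{r_1,\dots,r_j}=m^n$ shows that the number of extremal monomials in $M_k$ (resp.\ $P_k$) has the parity of $k^k$ (resp.\ $k^{k+1}$), so $ord(M_k)=ord(B_{k-1})$ and $ord(P_k)=ord(B_k)$ hold precisely for odd $k$, and exactly one of $M_{\ell+1}$, $P_\ell$ is extremal for every $\ell$; then, for even $\ell$, the identity $\sum_i\binom{n}{2i}=2^{n-1}$ shows that an odd number of the cross terms $C_kM_{\ell-k}$ ($k$ odd) are extremal, and their total cancels against $M_{\ell+1}-P_\ell$ at the top level, giving the strict inequality. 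Two further points: invoking Theorem~\ref{zeroC} is not legitimate here, since Theorem~\ref{Zager} is a purely formal statement about any series with denominators~(\ref{ordb}) and the vanishing $C_{(2l+1)2^m}=0$ is a property of the actual Mandelbrot uniformization, not of the general formal inverse; and invoking Theorem~\ref{ordC} for the odd case would be circular, since that theorem is exactly what the present argument is meant to establish.
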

{\bf Acknowledgment.} The author thanks Dierk Schleicher for sending him~\cite{bfh}.
\section{Proof of Theorem~\ref{Zager}}
We use standard notations
$$\binom{n}{k}=\frac{n!}{k!(n-k)!}, \ \ 0\le k\le n$$
for binomial coefficients, and
$$\binom{k}{k_1, k_2, ... ,k_j}=\frac{k!}{k_1!k_2!...k_j!}, \ \ 0\le k_1,...,k_j\le k, \ k_1+...+k_j=k$$ 
for multinomial ones.
\begin{lemma}\label{Rec}
$C_0=-B_0$. For every $\ell>0$,
\begin{equation}\label{rec}
C_\ell=-\sum_{k=0}^{\ell-1} C_k M_{\ell-k} - (M_{\ell+1}-P_\ell),
\end{equation}
where 
\begin{equation}\label{m}
M_k=\sum_{j=1}^k \binom{k}{j}\sum_{(i_1,...,i_j):\sum_t (i_t+1)=k}B_{i_1}...B_{i_j},
\end{equation}
\begin{equation}\label{p}
P_k=\sum_{j=1}^k \binom{k}{j}\sum_{(i_1,...,i_j):\sum_t (i_t+1)=k+1}B_{i_1}...B_{i_j},
\end{equation}
\end{lemma}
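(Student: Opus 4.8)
The plan is to recast the combinatorial quantities $M_k$ and $P_k$ as coefficient-extraction operators applied to the powers $\varphi(c)^k$, and then to read the recursion (\ref{rec}) off the inversion identity $\psi(\varphi(c))=c$ by a single comparison of coefficients. I work throughout with formal Laurent series in $c$; writing $\varphi(c)=c+\beta(c)$ with $\beta(c)=\sum_{i\ge0}B_ic^{-i}$, I denote by $[c^{0}](\cdot)$ and $[c^{-1}](\cdot)$ the coefficients of $c^{0}$ and of $c^{-1}$.

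First I would establish the identities $M_k=[c^{0}]\varphi(c)^{k}$ and $P_k=[c^{-1}]\varphi(c)^{k}$ for $k\ge1$, together with $M_0=[c^{0}]1=1$ and $P_0=0$. These follow from the binomial theorem together with the expansion of $\beta(c)^j$: writing
\[
\varphi(c)^{k}=\sum_{j=0}^{k}\binom{k}{j}c^{k-j}\beta(c)^{j},\qquad \beta(c)^{j}=\sum_{i_1,\dots,i_j\ge0}B_{i_1}\cdots B_{i_j}\,c^{-(i_1+\cdots+i_j)},
\]
the power of $c$ in a generic term is $k-\sum_t(i_t+1)$; selecting the exponent $0$ (resp. $-1$) reproduces exactly the constraint $\sum_t(i_t+1)=k$ (resp. $=k+1$) and the weight $\binom{k}{j}$ appearing in (\ref{m}) and (\ref{p}).

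Next I would extract the ``tail identity'' from the inversion relation. Since $\psi(w)=w+C_0+\sum_{k\ge1}C_kw^{-k}$ and $\psi(\varphi(c))=c$, substituting $w=\varphi(c)$ and cancelling the term $\varphi(c)$ yields
\[
\sum_{k\ge0}C_k\,\varphi(c)^{-k}=c-\varphi(c)=-\beta(c).
\]
With this in hand the recursion is immediate. For $\ell\ge1$,
\[
\sum_{k=0}^{\ell}C_kM_{\ell-k}=[c^{0}]\Big(\varphi(c)^{\ell}\sum_{k=0}^{\ell}C_k\varphi(c)^{-k}\Big)=[c^{0}]\Big(\varphi(c)^{\ell}\sum_{k\ge0}C_k\varphi(c)^{-k}\Big),
\]
and substituting the tail identity turns the right-hand side into
\[
[c^{0}]\big(\varphi(c)^{\ell}(c-\varphi(c))\big)=[c^{0}](c\,\varphi^{\ell})-[c^{0}]\varphi^{\ell+1}=[c^{-1}]\varphi^{\ell}-[c^{0}]\varphi^{\ell+1}=P_\ell-M_{\ell+1}.
\]
Isolating the $k=\ell$ term on the left (where $M_0=1$) gives $C_\ell+\sum_{k=0}^{\ell-1}C_kM_{\ell-k}=P_\ell-M_{\ell+1}$, which is precisely (\ref{rec}); the degenerate case $\ell=0$ collapses to $C_0=P_0-M_1=-B_0$.

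The one delicate point, and the step I would state with care, is the formal-series bookkeeping: that $\sum_{k\ge0}C_k\varphi(c)^{-k}$ is a well-defined Laurent series and that replacing the finite convolution by the infinite sum does not change $[c^{0}]$. Both rest on the single observation that $\varphi(c)=c+O(1)$, so $\varphi(c)^{-k}$ begins at order $c^{-k}$: hence each coefficient of $c^{-n}$ in the infinite sum receives contributions from only finitely many $k\le n$, and for $k>\ell$ the extra term contributes $C_k[c^{0}]\varphi(c)^{\ell-k}=0$ because $\varphi(c)^{\ell-k}$ (with $\ell-k<0$) involves only strictly negative powers of $c$. Everything else is a mechanical coefficient comparison, so I expect no real obstacle beyond this bookkeeping.
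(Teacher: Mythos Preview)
Your argument is correct and is essentially the paper's approach with the details fleshed out: the paper derives the recursion in one line by comparing the free terms in the identity $\varphi(c)^{\ell+1}+C_0\varphi(c)^{\ell}+\cdots+C_{\ell-1}\varphi(c)+C_\ell+O(1/c)=c\,\varphi(c)^{\ell}$ (obtained from $\psi(\varphi(c))=c$ upon multiplication by $\varphi(c)^{\ell}$), which is exactly your computation once one recognizes that $M_k=[c^{0}]\varphi(c)^{k}$ and $P_k=[c^{-1}]\varphi(c)^{k}$. Your explicit verification of these coefficient identities and of the formal-series bookkeeping supplies the details the paper leaves implicit.
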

\begin{proof}
That follows from a comparison of the free terms in the formal identity
$$\varphi(c)^{\ell+1}+C_0\varphi(c)^{\ell}+...+C_{\ell-1}\varphi(c)+C_\ell+
O(\frac{1}{c})=c\varphi(c)^{\ell}.$$
\end{proof}

\begin{lemma}\label{Beq}
Let $i_t$, $t=1,...,j$, be non-negative integers. Denote 
$I=i_1+...+t_j$. Then we have:
\begin{equation}\label{beq}
ord (B_{i_1}...B_{i_j})\ge ord (B_{I+j-1}).
\end{equation}
The equality in~(\ref{beq}) holds if and only if 
the (integer)
$\binom{I+j}{i_1+1, i_2+1, ... ,i_j+1}$ is an odd number.
\end{lemma}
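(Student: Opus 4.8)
The plan is to prove Lemma~\ref{Beq} by reducing the statement to the known exact value of $ord(B_\ell)$ from Theorem~\ref{ordB} together with the classical Kummer/Legendre formula for the $2$-adic valuation of factorials. First I would rewrite $ord(B_\ell)$ in the cleanest form for this purpose. By~(\ref{ordb}) we have $ord(B_\ell) = -p_\ell = -ord((2\ell+2)!)$, so the quantity $ord(B_{i_1}\cdots B_{i_j}) = \sum_{t} ord(B_{i_t}) = -\sum_t ord((2i_t+2)!)$, while $ord(B_{I+j-1}) = -ord((2(I+j-1)+2)!) = -ord((2I+2j)!)$. Since $\sum_t (2i_t+2) = 2I+2j$, the inequality~(\ref{beq}) is exactly the assertion
\begin{equation*}
ord((2I+2j)!) \le \sum_{t=1}^{j} ord((2i_t+2)!),
\end{equation*}
i.e. that the $2$-adic valuation of a multinomial coefficient is nonnegative — which is of course automatic because $\binom{2I+2j}{2i_1+2,\dots,2i_j+2}$ is an integer. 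So the inequality part is immediate once the formula for $ord(B_\ell)$ is in hand.

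The substantive part is the equality characterization, and here the key step is to relate the parity of $\binom{2I+2j}{2i_1+2,\dots,2i_j+2}$ to the parity of $\binom{I+j}{i_1+1,\dots,i_j+1}$. The natural tool is Kummer's theorem: $ord\big(\binom{2I+2j}{2i_1+2,\dots,2i_j+2}\big)$ equals the number of carries when the numbers $2i_1+2,\dots,2i_j+2$ are added in base $2$, and likewise $ord\big(\binom{I+j}{i_1+1,\dots,i_j+1}\big)$ equals the number of carries when $i_1+1,\dots,i_j+1$ are added in base $2$. But doubling every summand merely shifts all binary digits left by one place, so the carry pattern is identical; hence the two multinomial coefficients have the same $2$-adic valuation, and in particular one is odd iff the other is. Therefore $ord(B_{i_1}\cdots B_{i_j}) = ord(B_{I+j-1})$ holds precisely when $ord\big(\binom{I+j}{i_1+1,\dots,i_j+1}\big)=0$, i.e. when that multinomial coefficient is odd, which is the claim. (I should also remark that this multinomial coefficient is indeed a positive integer, since $\sum_t (i_t+1) = I+j$, so calling it "odd" is meaningful.)

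Alternatively, if one prefers to avoid quoting Kummer's theorem, the equality case can be derived directly from Legendre's formula $ord(n!) = n - s_2(n)$, where $s_2(n)$ is the binary digit sum: equality in $ord((2I+2j)!) \le \sum_t ord((2i_t+2)!)$ becomes $s_2(2I+2j) = \sum_t s_2(2i_t+2)$, i.e. $s_2\big(\sum_t(i_t+1)\big) = \sum_t s_2(i_t+1)$ after dividing all arguments by $2$ (which does not change digit sums), and this "digit-sum-additivity" is exactly the carry-free condition equivalent to $\binom{I+j}{i_1+1,\dots,i_j+1}$ being odd. The main (minor) obstacle is bookkeeping: one must carefully track the relation $I+j = \sum_t(i_t+1)$ and that $B_{i_1}\cdots B_{i_j}$ has index-sum $I + j - 1$ matching $\ell$ in Theorem~\ref{ordB}; there is also the typo in the statement ("$t_j$" should read "$i_j$" in the definition of $I$), but the intended reading is clear and does not affect the argument.
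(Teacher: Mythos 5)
Your proof is correct, and it shares the paper's central idea — expressing the difference of the two sides of~(\ref{beq}) as the $2$-adic valuation of a multinomial coefficient — but it executes it through a different form of the exponent formula and therefore needs one extra ingredient. The paper takes $p_\ell=\ell+1+ord(\ell+1)!$ from~(\ref{ordb}), so that $\sum_t ord(B_{i_t})-ord(B_{I+j-1})$ is \emph{immediately} equal to $ord\binom{I+j}{i_1+1,\dots,i_j+1}$; integrality of this multinomial gives the inequality and its oddness gives the equality case, with no outside input. You instead take $p_\ell=ord((2\ell+2)!)$, which turns the difference into $ord\binom{2I+2j}{2i_1+2,\dots,2i_j+2}$, and you then must identify this with $ord\binom{I+j}{i_1+1,\dots,i_j+1}$; your justification (Kummer's carry count is unchanged when all summands are doubled, or equivalently Legendre's formula $ord(n!)=n-s_2(n)$ together with $s_2(2n)=s_2(n)$) is correct, so the equality characterization follows. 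What your route buys is only the slightly slicker handling of the inequality (it is "automatic" integrality of the doubled multinomial); what it costs is the appeal to Kummer/Legendre to undo the doubling, which the paper's choice of the un-doubled form of $p_\ell$ renders unnecessary. Your closing remarks — that $\sum_t(i_t+1)=I+j$ makes the multinomial a genuine integer, and that "$t_j$" in the statement is a typo for "$i_j$" — are both accurate and harmless to the argument.
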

\begin{proof} By~(\ref{ordb}), 
$ord(B_{i_1}...B_{i_j})=\sum_{t=1}^jord(B_{i_t})=-(I+j)-ord\Pi_{t=1}^j(i_t+1)!$
and $ord(B_{I+j-1})=-(I+j)-ord(I+j)!$. But $ord(I+j)!-ord\Pi_{t=1}^j(i_t+1)!=
ord\binom{I+j}{i_1+1,...,i_j+1}$, where $\binom{I+j}{i_1+1,...,i_j+1}$ is integer.
Hence,  $ord(I+j)!-ord\Pi_{t=1}^j(i_t+1)!\ge 0$ and the equality holds if and only if 
$\binom{I+j}{i_1+1,...,i_j+1}$ is odd.
\end{proof}
We will use repeatedly the following obvious 

{\bf Fact.}{\it If $D=\sum_{k=1}^{k_0} D_k$ is a sum of fractions 
$D_k=\frac{L_k}{2^{r_k}}$, where $L_k$ is an odd integer, $p_k$ is an integer
and $p_k\le p_0$ for every $k$, then $ord(D)\ge -p_0$. The equality
$ord(D)=-p_0$ holds if and only if $p_k=p_0$ for an odd number of 
indexes $k$.}
 
We need also two identities (a)-(b):

(a) For any integers $1\le m\le n$,
\begin{equation}\label{id1}
\sum_{j=1}^m \binom{m}{m-j}\sum_{(r_1,...,r_j): r_t\ge 1, 1\le t\le j,
r_1+...+r_j=n} \binom{n}{r_1, r_2, ... ,r_j}=m^n.
\end{equation}

(b) For any integer $n\ge 1$,
\begin{equation}\label{id2}
\sum_{i=0}^{[n/2]} \binom{n}{2i}=2^{n-1}.
\end{equation}
\begin{lemma}\label{key}

(a) For every $k>0$, $ord(M_k)\ge ord(B_{k-1})$ and the equality holds
if and only if $k$ is an odd number.

(b)  For every $k>0$, $ord(P_k)\ge ord(B_{k})$ and the equality holds
if and only if $k$ is an odd number.
\end{lemma}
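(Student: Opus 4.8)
\textbf{Proof plan for Lemma~\ref{key}.}
The plan is to prove parts (a) and (b) in parallel, since both $M_k$ and $P_k$ are sums of products $B_{i_1}\cdots B_{i_j}$ indexed by compositions, and the only difference between the two is whether $\sum_t(i_t+1)=k$ or $\sum_t(i_t+1)=k+1$. I would set $I=i_1+\dots+i_j$, so in $M_k$ the constraint reads $I+j=k$ and in $P_k$ it reads $I+j=k+1$. Lemma~\ref{Beq} then says that each term $B_{i_1}\cdots B_{i_j}$ occurring in $M_k$ has $ord\ge ord(B_{k-1})$, and each term in $P_k$ has $ord\ge ord(B_k)$, with equality exactly when the relevant multinomial coefficient $\binom{k}{i_1+1,\dots,i_j+1}$ (resp. $\binom{k+1}{i_1+1,\dots,i_j+1}$) is odd. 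This immediately gives the inequalities $ord(M_k)\ge ord(B_{k-1})$ and $ord(P_k)\ge ord(B_k)$ via the Fact. The real content is the equality characterization: by the Fact, $ord(M_k)=ord(B_{k-1})$ if and only if the number of terms $B_{i_1}\cdots B_{i_j}$ in~(\ref{m}) whose $ord$ equals $ord(B_{k-1})$ — weighted by the binomial $\binom{k}{j}$ that multiplies the inner sum — is odd, and similarly for $P_k$.

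So the crux is a mod-$2$ count. For part (a): after fixing $j$, the number of tuples $(i_1,\dots,i_j)$ with all $i_t\ge 0$, $\sum(i_t+1)=k$, and $\binom{k}{i_1+1,\dots,i_j+1}$ odd is what must be counted mod $2$, then summed against $\binom{k}{j}\pmod 2$. I would substitute $r_t=i_t+1\ge 1$, so I am counting compositions $r_1+\dots+r_j=k$ into $j$ positive parts with $\binom{k}{r_1,\dots,r_j}$ odd, and the total weighted count is $\sum_{j=1}^k\binom{k}{j}\#\{(r_1,\dots,r_j): r_t\ge1,\ \sum r_t=k,\ \binom{k}{r_1,\dots,r_j}\text{ odd}\}\pmod 2$. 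Here identity~(a), equation~(\ref{id1}) with $m=n=k$, is exactly the tool: it says $\sum_{j}\binom{k}{j}\sum_{(r_1,\dots,r_j)}\binom{k}{r_1,\dots,r_j}=k^k$ where the full inner sum is over \emph{all} positive compositions. Working modulo $2$, $\binom{k}{r_1,\dots,r_j}$ is congruent to its own parity, so the left side of~(\ref{id1}) reduces mod $2$ precisely to the weighted count of odd multinomials, and hence $ord(M_k)=ord(B_{k-1})$ iff $k^k$ is odd, i.e. iff $k$ is odd. Part (b) is the same computation with $k$ replaced by $k+1$ in the multinomial but $\binom{k}{j}$ unchanged in the outer weight; substituting $r_t=i_t+1$ gives compositions of $k+1$ into $j$ positive parts, and one has to relate $\sum_j\binom{k}{j}(\text{count for }k+1)$ mod $2$ to something computable. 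I would split $\binom{k}{j}$ using Pascal, $\binom{k}{j}=\binom{k+1}{j}-\binom{k}{j-1}$, so that the $\binom{k+1}{j}$ piece is handled by~(\ref{id1}) with $m=n=k+1$ (giving $(k+1)^{k+1}$, whose parity is opposite to that of $k$), while the $\binom{k}{j-1}$ piece, after reindexing $j\mapsto j+1$, becomes a sum over compositions of $k+1$ into $j+1$ parts weighted by $\binom{k}{j}$; peeling off the last part $r_{j+1}$ and using~(\ref{id1}) for the remaining $j$ parts summing to $k+1-r_{j+1}$ turns it into $\sum_{s=1}^{k+1}\binom{k+1-s}{\text{stuff}}$-type sum whose mod-$2$ value I expect to collapse — this is where identity~(b), equation~(\ref{id2}), should enter, the factor $2^{n-1}$ forcing all but a controlled part of the sum to vanish mod $2$, leaving the parity of $P_k$ tied to that of $B_k$ exactly when $k$ is odd.

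I would organize the writeup as: first state the reduction of each of $ord(M_k)$, $ord(P_k)$ to a weighted mod-$2$ count via Lemma~\ref{Beq} and the Fact; then dispatch (a) cleanly using~(\ref{id1}); then do (b) via the Pascal splitting plus~(\ref{id1}) and~(\ref{id2}). The main obstacle I anticipate is part (b): one must be careful that the substitution $r_t=i_t+1$ is a bijection onto positive compositions (fine), but more delicately, when several distinct tuples $(i_1,\dots,i_j)$ give multinomials of the \emph{same} $ord$ as $B_k$ but the \emph{product} $B_{i_1}\cdots B_{i_j}$ has that exact $ord$ only when the multinomial is odd, one needs the parity bookkeeping to line up exactly — and the combinatorial identity~(\ref{id1}) must be applied with the correct upper parameter ($k$ vs $k+1$) in each of the two pieces of the Pascal split, with the reindexing in the second piece done without off-by-one errors. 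A secondary subtlety is handling the boundary/degenerate terms (e.g. $j=1$, or the top $j=k$) to be sure they are included consistently in both the Lemma's sums and in the identities~(\ref{id1})--(\ref{id2}), and that the case $k=1$ (where one checks $M_1=B_0$, $P_1=B_1$ directly) is consistent with the general argument.
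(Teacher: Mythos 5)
Your treatment of part (a) is correct and is exactly the paper's argument: reduce via Lemma~\ref{Beq} and the Fact to the parity of the weighted sum $\sum_{j=1}^k\binom{k}{j}\sum_{r_1+\dots+r_j=k}\binom{k}{r_1,\dots,r_j}$, then invoke~(\ref{id1}) with $m=n=k$ to identify it with $k^k$.

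For part (b), however, your proposal has a genuine gap, and it stems from overlooking that identity~(\ref{id1}) is stated for \emph{arbitrary} $1\le m\le n$, not only for $m=n$. You correctly reduce (b) to the parity of $\sum_{j=1}^k\binom{k}{j}\sum_{r_1+\dots+r_j=k+1}\binom{k+1}{r_1,\dots,r_j}$, but this is precisely the left-hand side of~(\ref{id1}) with $m=k$, $n=k+1$ (note $\binom{m}{m-j}=\binom{m}{j}$), so it equals $k^{k+1}$, which is odd if and only if $k$ is odd — that is the paper's one-line conclusion of (b). Your alternative route via the Pascal split $\binom{k}{j}=\binom{k+1}{j}-\binom{k}{j-1}$ is never carried to completion: the first piece also requires removing the boundary term $j=k+1$ (contributing $T_{k+1}=(k+1)!$) before~(\ref{id1}) with $m=n=k+1$ applies, and the second piece $\sum_j\binom{k}{j-1}T_{j+1-\text{indexed}}$ — whose oddness is what you would actually need — is left at the level of ``I expect to collapse.'' Moreover, your expectation that~(\ref{id2}) enters here is misplaced: in the paper~(\ref{id2}) is not used in Lemma~\ref{key} at all; it appears only later, in step (iv) of the induction for even $\ell$, to count the odd binomials $\binom{\ell+1}{k+1}$ over odd $k$. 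So as written, part (b) is not established; the fix is simply to apply~(\ref{id1}) with the two parameters $m=k$, $n=k+1$ rather than forcing the $m=n$ case.
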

\begin{proof} (a) We use~(\ref{m}). By Lemma~\ref{Beq}, for every multi-index
$(i_1,...,i_j)$ as in~(\ref{m}), $ord(B_{i_1}...B_{i_j})\ge ord(B_{k-1})$
and the equality holds if and only if $\binom{k}{i_1+1,...,i_j+1}$ is odd.
Hence, $ord(\sum_{(i_1,...,i_j):\sum_t (i_t+1)=k}B_{i_1}...B_{i_j})\ge ord(B_{k-1})$
and the equality holds if and only if the sum
$$\sum_{(r_1,...,r_j): r_t\ge 1, 1\le t\le j,r_1+...+r_j=k} \binom{k}{r_1, r_2, ... ,r_j}$$
is odd. Then, by~(\ref{m}), $ord(M_k)\ge ord(B_{k-1})$ 
and the equality holds if and 
only if the sum 
$$\sum_{j=1}^k \binom{k}{j}\sum_{(r_1,...,r_j): r_t\ge 1, 1\le t\le j,
r_1+...+r_j=k} \binom{k}{r_1, r_2, ..., r_j}$$
is odd. By the identity~(\ref{id1}) with $m=n=k$, 
the latter sum is equal to $k^k$.
This proves (a). The proof of (b) is similar using~(\ref{p}) and~(\ref{id1}) where we put 
$m=k$ and $n=k+1$.
\end{proof}
Now we prove Theorem~\ref{Zager} by indunction on $\ell\ge 1$. As $C_1=-B_1$,
the statement is true for $\ell=1$. Assume that, for some $\ell>1$,
$ord(C_k)>ord(B_k)$ for any even $2\le k\le \ell-1$,
and $ord(C_k)=ord(B_k)$ for any odd $1\le k\le \ell-1$.
We show in (i)-(iv), see below, 
that then $ord(C_k)>ord(B_k)$, if $\ell$ is even and
$ord(C_k)=ord(B_k)$, if $\ell$ is odd.  

(i). By Lemma~\ref{key}, $ord(M_{\ell+1}-P_\ell)=ord(B_\ell)$ as the integers $(\ell+1)^{\ell+1}$ and $\ell^{\ell+1}$
have different parity.

(ii). $ord(C_0 M_\ell)=ord(B_0 M_\ell)=-1+ord(M_\ell)\ge
-1+ord(B_{\ell-1})=-(1+\ell)-ord(\ell!)$ and the equality holds if and only if
$\ell$ is odd. In turn, $-(1+\ell)-ord(\ell!)\ge -(1+\ell)-ord(\ell+1)!=
ord(B_\ell)$, where the equality holds if and only if $\ell$ is even. Thus
in any case, $ord(C_0 M_\ell)>ord(B_\ell)$.

(iii). Let $\ell$ be an odd number. Then, for each odd $1\le k\le \ell-1$,
$\ell-k$ is even, hence, $ord(M_{\ell-k})>ord(B_{\ell-k-1})$.
Also, $ord(C_k)\ge ord(B_k)$ by the induction hypothesis.
For each even $1\le k\le \ell-1$, $\ell-k$ is odd, hence,
$ord(M_{\ell-k})=ord(B_{\ell-k-1})$, but, by the induction hypothesis,
$ord(C_k)>ord(B_k)$. Therefore, for each $1\le k\le \ell-1$,
$ord(C_k M_{\ell-k})>ord(B_k B_{\ell-k-1})\ge ord(B_\ell)$ and using also (ii),
$ord(C_0 M_\ell+\sum_{k=1}^{\ell-1} C_k M_{\ell-k})>ord(B_\ell)$. Thus, by (i),
$$ord(C_\ell)=ord(M_{\ell+1}-P_\ell+\sum_{k=0}^{\ell-1} C_k M_{\ell-k})=ord(B_\ell).$$

(iv). Let $\ell$ be even. For even $1\le k\le \ell-1$, 
by the induction hypotheis, $ord(C_k)>ord(B_k)$, hence,
$ord(C_k M_{e\\-k})>ord(B_k B_{\ell-k-1})\ge ord(B_\ell)$.
For odd  $1\le k\le \ell-1$, by the induction hypotheis, $ord(C_k)=ord(B_k)$
and since $\ell-k$ is odd too, $ord (M_{\ell-k})=ord (B_{\ell-k-1})$. Hence,
for odd $k$, $ord (C_k M_{\ell-k})=ord (B_k B_{\ell-k-1})\ge ord(B_\ell)$
and the equality holds if and only if $\binom{\ell+1}{k+1}$ is odd.
By the identity~(\ref{id2}), $\sum_{k=1, k \text{ is odd }}^{\ell-1}
\binom{\ell+1}{k+1}=2^\ell-1$ (we use that $\ell+1$ is odd).
As $2^\ell-1$ is even, that means that in the sum
$\sum_{k=0}^{\ell-1}C_k M_{\ell-k}$, 
we have: $ord (C_k M_{\ell-k})\ge ord (B_\ell)$ 
for every $k$ and the number of indexes where the equality holds is odd. Hence,
$ord(\sum_{k=0}^{\ell-1}C_k M_{\ell-k})=ord(B_\ell)$. Using (i) we have
in the considered case: $ord(M_{\ell+1}-P_\ell)=ord(\sum_{k=0}^{\ell-1}C_k M_{\ell-k})=ord(B_\ell)$.
Therefore,
$$ord(C_\ell)=ord(M_{\ell+1}-P_\ell+\sum_{k=0}^{\ell-1}C_k M_{\ell-k})>ord(B_\ell).$$
This completes the step of induction.

\end{document}